\newcommand{\bq}{\begin{quote}}
\newcommand{\eq}{\end{quote}}
\newcommand{\bi}{\begin{itemize}}
\newcommand{\ei}{\end{itemize}}
\newcommand{\bd}{\begin{description}}
\newcommand{\ed}{\end{description}}
\newcommand{\ben}{\begin{enumerate}}
\newcommand{\een}{\end{enumerate}}
\newcommand{\bbm}{\begin{bmatrix}}
\newcommand{\ebm}{\end{bmatrix}}
\newcommand{\bea}{\begin{eqnarray*}}
\newcommand{\eea}{\end{eqnarray*}}
\newtheorem{theorem}{Theorem}[section]
\newtheorem{lemma}[theorem]{Lemma}
\newtheorem{remark}[theorem]{Remark}
\newtheorem{corollary}[theorem]{Corollary}
\newtheorem{algorithm}[theorem]{Algorithm}
\def\Alt{{\rm Alt}}
\def\Sym{{\rm Sym}}
\def\2G2{\ensuremath{^2{\rm G}_2}}
\def\sl{{\rm{SL}}}
\def\gl{{\rm{GL}}}
\def\psl{{\rm{PSL}}}
\def\pgl{{\rm{PGL}}}
\def\ppsl{ ( {\rm{P}} ) {\rm{SL}}}
\def\qpone{q \equiv 1  \bmod 4}
\def\qmone{q \equiv -1 \bmod 4}
\definecolor{darkgreen}{rgb}{0,0.6,0}
\title{Construction of some subgroups in black box groups $\pgl_2(q)$ and $\ppsl_2(q)$}
\author{Alexandre Borovik\footnote{School of Mathematics, University of Manchester, UK; alexandre@borovik.net} \   and \c{S}\"{u}kr\"{u} Yal\c{c}\i nkaya\footnote{Nesin Mathematics Village, Izmir, Turkey; sukru.yalcinkaya@gmail.com}}
\date{}
\begin{document}

\maketitle

\begin{abstract}
For the black box groups $X$ encrypting $\pgl_2(q)$, $q$ odd, we propose an algorithm constructing a subgroup encrypting $\Sym_4$ and subfield subgroups of $X$. We also present the analogous algorithms for black box groups encrypting $\ppsl_2(q)$.
\end{abstract}

\section{Introduction}

It becomes apparent that the groups $\psl_2(q)$ and $\pgl_2(q)$, $q$ odd, play a fundamental role in the constructive recognition of black box groups of Lie type of odd characteristic \cite{suko14A}. This paper provides the fundamentals for the algorithms presented in $\cite{suko14A}$, that is, we present polynomial time Las Vegas algorithms constructing black box subgroups encrypting $\Sym_4$ and subfield subgroups of black box groups encrypting $\pgl_2(q)$. We also describe the corresponding algorithms for the black box groups encrypting $\ppsl_2(q)$.

In this paper, we use description of  $\pgl_2(q)$ as the semidirect product $\pgl_2(q)=\psl_2(q)\rtimes  \langle \delta \rangle$ where $\delta$ is a diagonal automorphism of $\psl_2(q)$ of order $2$. We refer the reader to \cite[Chapter XII]{dickson1958} or \cite[Chapter 3.6]{suzuki1982} for the subgroup structure of $\ppsl_2(q)$.

A black box group $X$ is a black box (or an oracle, or a device, or an algorithm) operating with $0$-$1$ strings of bounded length which encrypt (not necessarily in a unique way) elements of some finite group $G$. The functionality of the black box is specified by the following axioms: the black box

\begin{itemize}
\item[\textbf{BB1}] produces strings encrypting random elements from $G$;
\item[\textbf{BB2}]computes a string encrypting the product of two group elements given by strings or a string encrypting the inverse of an element given by a string; and
\item[\textbf{BB3}] compares whether two strings encrypt the same element in $G$.\end{itemize}

In this setting we say that black box group $X$ \emph{encrypts} $G$.

A typical example is provided by a group $G$ generated in a big matrix group $\gl_n(p^k)$ by several matrices $g_1,\dots, g_l$. The product replacement algorithm \cite{celler95.4931} produces  a sample of (almost) independent elements from a distribution on $G$ which is close to the uniform distribution (see the discussion and further development in \cite{babai00.627, babai04.215, bratus99.91, gamburd06.411, lubotzky01.347, pak00.476, pak01.301, pak.01.476, pak02.1891}). We can, of course, multiply, invert, compare matrices. Therefore the computer routines for these operations together with the sampling of the product replacement algorithm run on the  tuple of generators $(g_1,\dots, g_l)$ can be viewed as a black box $X$ encrypting the group $G$. The group $G$ could be unknown---in which case we are interested in its isomorphism type---or it could be known, as it happens in a variety of other black box problems.

Unfortunately, an elementary task of determining the order of a string representing a group element involves either integer factorisation or discrete logarithm. Nevertheless black box problems for matrix groups have a feature which makes them more accessible:

\begin{itemize}
\item[\textbf{BB4}] We are given a \emph{global exponent} of $X$, that is, a natural number $E$ such that it is expected that $x^E = 1$ for all elements $x \in X$ while computation of $x^E$ is computationally feasible.
\end{itemize}

Usually, for a black box group $X$ arising from a subgroup in the ambient group $\gl_n(p^k)$, the exponent of $\gl_n(p^k)$ can be taken for a global exponent of $X$.

\begin{quote}
\emph{In this paper, we assume that all our black box groups satisfy assumptions BB1--BB4.}
\end{quote}

A randomized algorithm is called \textit{Las Vegas} if it always returns a positive answer or fails with some probability of error bounded by the user, see \cite{babai97.1} for a discussion of randomized algorithms.

We refer reader to \cite{suko12F} for a more detailed discussion of black box groups and the nature of the problems in black box group theory.

Our principal result is the following.

\begin{theorem}\label{pgl2}
Let $X$ be a black box group encrypting $\pgl_2(p^k)$ where $p$ is a known odd prime and $k$ is unknown. Then there exists a Las Vegas algorithm constructing a subgroup encrypting $\Sym_4$ and, if $p \neq 5$, a black box  subfield subgroup $\pgl_2(p)$.

The running time of the algorithm is $O(\xi(\log \log q +1) +\mu(k\log \log q \log q + \log q))$, where $\mu$ is an upper bound on the time requirement for each group operation in $X$ and $\xi$ is an upper bound on the time requirement, per element, for the construction of random elements of $X$.
\end{theorem}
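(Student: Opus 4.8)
The plan is to begin by pinning down the field: we compute $k$ (equivalently $q=p^{k}$), after which everything else can be done by explicit arithmetic inside centralisers of involutions. To recover $q$ I would use \textbf{BB1} to draw $O(\log\log q)$ random elements; since $\phi(n)/n\gg 1/\log\log n$, with high probability one of them, say $g$, has order exactly $q-\eps$ for some $\eps\in\{\pm 1\}$, i.e.\ generates a maximal torus. Using \textbf{BB4} and the known prime $p$, I then compute the iterated $p$-th powers $g,\,g^{p},\,g^{p^{2}},\dots$ and stop at the first exponent $j$ with $g^{p^{j}}\in\{g,g^{-1}\}$; this $j$ equals $k$ and the sign tells me whether $g$ generated a split or a nonsplit torus (the test is applied to all the candidates, keeping the smallest $j$, since a non-generator would under-report $k$). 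From now on $q$, and with it $q\pm 1$, $q\bmod 4$ and $\nu_{2}(q\pm 1)$, are known. Next, for an involution $i$ --- obtained by powering a random element of even order, a constant fraction of $X$, into its $2$-part --- I would reconstruct $C_{X}(i)$ by the standard involution--centraliser trick: for random $x$ the element $i\,i^{x}$ lies in a maximal torus, suitable powers of it land back in $C_{X}(i)$, and $O(\log\log q)$ such elements generate $C_{X}(i)\cong D_{2m}$ with $m\in\{q-1,q+1\}$, together with a generator $\rho$ of its cyclic subgroup of index $2$. Comparing $m$ with $q\bmod 4$ decides whether $i$ is inner; I keep an inner $i$, for which $4\mid m$ automatically.

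To construct a subgroup isomorphic to $\Sym_{4}$ I would work inside $D_{2m}=C_{X}(i)$. Choose an involution $j\ne i$ in $C_{X}(i)$ lying in $\psl_{2}(q)$ (half of the reflections of $D_{2m}$ do, and inner/outer membership is decidable from the structure already exposed); since $i$ is inner, $ij$ is too, so $V=\langle i,j\rangle\cong V_{4}$ is \emph{fully inner} and $N_{X}(V)\cong\Sym_{4}$. The crucial observation is that generators of $N_{X}(V)$ can be written down rather than searched for: writing $\rho=r^{t}$ with $\gcd(t,m)=1$, so that $t$ is odd, one has $i=\rho^{\,m/2}$, hence $\tau:=\rho^{\,m/4}$ is a square root of $i$ inside the cyclic part of $C_{X}(i)$, and conjugation by $\tau$ fixes $i$ and interchanges $j$ with $ij$. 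Running the same recipe inside $C_{X}(j)$ (again $\cong D_{2m}$, $4\mid m$, because $j$ is inner) produces $\tau'$ with $\tau'^{2}=j$, fixing $j$ and interchanging $i$ with $ij$. Then $\langle\tau,\tau'\rangle$ normalises $V$, properly contains the Sylow $2$-subgroup $\langle V,\tau\rangle\cong D_{8}$ and contains an element of order $4$, so it must equal $N_{X}(V)\cong\Sym_{4}$; this is checked cheaply by sampling (only orders $\le 4$ and at most $24$ distinct elements appear), making this step Las Vegas.

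To construct a subfield subgroup $\pgl_{2}(p)$, $p\ne 5$, I would attach to the $\Sym_{4}$ just built its \emph{canonical} subfield torus. Pick an element $w$ of order $3$ in the $\Sym_{4}$; for $p\ne 3$ it is regular semisimple, so $C_{X}(w)$ is a cyclic maximal torus, and $3$ divides $|C_X(w)|$. Raising a generator of $C_{X}(w)$ to a suitable power produces the unique subgroup $\langle c\rangle$ of $C_{X}(w)$ of order $p-\eta$, where $\eta\in\{\pm1\}$ is determined by $3\mid p-\eta$; one checks that $(p-\eta)\mid|C_X(w)|$ in all cases, and that $w\in\langle c\rangle$. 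Since $\langle c\rangle$ is exactly the torus of $w$ inside \emph{any} subfield copy $\pgl_{2}(p)$ containing the $\Sym_{4}$, the group $\langle\Sym_{4},c\rangle$ is contained in every such copy; as $|c|=p-\eta>4$ it strictly contains $\Sym_{4}$, and by the maximality of $\Sym_{4}$ in $\pgl_{2}(p)$ (refined, when $\Sym_{4}\le\psl_{2}(p)$, by also using that $c$ is outer), $\langle\Sym_{4},c\rangle$ equals that subfield $\pgl_{2}(p)$. A final sampling check --- element orders among those occurring in $\pgl_{2}(p)$, at most $p^{3}-p$ distinct elements --- certifies the output. (The case $p=3$ is the same statement, as $\pgl_{2}(3)=\Sym_{4}$.)

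The main difficulty to be addressed is forcing the constructed subgroups to be \emph{exactly} $\Sym_{4}$ and $\pgl_{2}(p)$ and not larger subgroups of $X$: a ``third generator'' picked by brute search lives in an ambient set of size $\Theta(q)$, only an $O(1/q)$-fraction of which is usable, costing a factor polynomial in $q$. This is circumvented by the fact that the needed elements are explicit powers ($\rho^{m/4}$ for $\Sym_{4}$, the canonical $c$ for the subfield) of generators of cyclic parts of involution centralisers, so that the compatibility that pins down the small overgroup is built in; the residual point is to rule out the $\Sym_{4}$ being contained in more than one subfield copy $\pgl_{2}(p)$, and it is precisely here that $p=5$ must be excluded, owing to the exceptional isomorphisms $\psl_{2}(5)\cong\psl_{2}(4)\cong A_{5}$ and the consequent ambiguity in the $\Sym_{4}$-classes. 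Finally, accounting for the $O(\log\log q)$ random elements used (for the torus generator and for the centraliser constructions), together with the group operations --- powerings by exponents of bit-length $O(\log q)$, with the test that reads off $k$ costing $O(\mu k\log q)$ per candidate and run over $O(\log\log q)$ candidates --- yields the stated running time $O(\xi(\log\log q+1)+\mu(k\log\log q\log q+\log q))$.
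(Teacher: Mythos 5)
Your field-size computation and your $\Sym_4$ construction are essentially sound and close to the paper's: the paper's ``alternative construction'' of $\Sym_4$ is exactly your $\langle\tau,\tau'\rangle=\langle t_i,t_j\rangle$ with $t_i,t_j$ the order-$4$ elements of the tori in $C_X(i)$ and $C_X(j)$ (one small slip: when several candidate torus generators report different values of $j$ with $g^{p^j}\in\{g,g^{-1}\}$, a non-generator \emph{under}-reports $k$, so you must keep the \emph{largest} $j$, not the smallest). The genuine gap is in your subfield step. You take $w$ of order $3$ in the $\Sym_4$ and propose to ``raise a generator of $C_X(w)$ to a suitable power.'' But constructing $C_X(w)$ for an element of \emph{odd} order is not something the black box axioms BB1--BB4 or the involution-centralizer machinery give you: Bray's method relies on the fact that $i\,i^x$ is inverted by the involution $i$, and has no analogue for order $3$. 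A brute-force search for the maximal torus through $w$ (equivalently, for the involution commuting with $w$) succeeds only with probability $O(1/q)$, which destroys the claimed running time. The paper avoids this entirely by never leaving $C_X(i)$: the subfield torus element is the explicit power $r=t^{|T|/(p\pm1)}$ of the generator $t$ of the cyclic part $T$ of the \emph{involution} centralizer already in hand, and the uniqueness of the subgroup of order $p\pm1$ in the cyclic group $T$ guarantees $r$ lies in the subfield copy $H\cong\pgl_2(p)$ containing the $\Sym_4$; then $\langle r,x\rangle=H$ follows from the subgroup structure of $\pgl_2(p)$ because $|r|\geq 7$ when $p\geq 7$.

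Relatedly, your explanation of the exclusion of $p=5$ is not the right one. It has nothing to do with $\psl_2(5)\cong\psl_2(4)\cong\Alt_5$ or with the $\Sym_4$ lying in several subfield copies: the issue is simply that for $p=5$ the element $r\in T$ of order $p\pm1$ with $(p\pm1)/2$ even has order $4$, so $\langle r,x\rangle$ is again only $\Sym_4$ (which is maximal in $\pgl_2(5)$), and the construction yields nothing new. Ironically, your proposed element $c$ of order $p-\eta=6$ would escape $\Sym_4$ for $p=5$ --- a sign that the exceptional-isomorphism explanation is not what is driving the exclusion. Finally, your ``certification by listing up to $p^3-p$ elements'' is not compatible with the stated complexity for large $p$; the paper's Las Vegas guarantee comes from the probability bounds (e.g.\ the analogue of the $1/2-1/2q$ bound for the odd-order products used to build the order-$3$ element), not from exhaustive verification.
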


\begin{corollary}\label{psl2}
Let $X$ be a black box group encrypting $\ppsl_2(p^k)$ where $p$ is a known odd prime and $k$ is unknown. Then there exists a Las Vegas algorithm constructing a subgroup encrypting
\begin{itemize}
\item[{\rm (i)}]  $\Alt_4$ or $\Sym_4$ when $q\equiv \pm 3 \mbox{ mod } 8$ or  if $q\equiv \pm 1 \mbox{ mod } 8$, respectively, if $X\cong \psl_2(p^k)$, and the normalizer $N$  of a quaternion group, if $X\cong \sl_2(p^k)$; and
\item[{\rm (ii)}] if $p \neq 5,7$ a subfield subgroup $\ppsl_2(p)$.
\end{itemize}
The running time of the algorithm is $O(\xi(\log \log q +1) +\mu(k\log \log q \log q + \log q))$, where $\mu$ is an upper bound on the time requirement for each group operation in $X$ and $\xi$ is an upper bound on the time requirement, per element, for the construction of random elements of $X$.
\end{corollary}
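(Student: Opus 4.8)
The plan is to derive the corollary from Theorem~\ref{pgl2} by tracking how the subgroups built in that theorem lie inside the index-two subgroup $\psl_2(q)\le\pgl_2(q)$ and inside its preimage $\sl_2(q)$; one cannot instead rebuild $\pgl_2(q)$, since an outer diagonal automorphism of $\psl_2(q)$ is not available to a black box. The first task is to remove the ambiguity in $\ppsl_2(p^k)$. For $q$ odd, $\sl_2(q)$ has exactly one involution, namely its central element, while $\psl_2(q)$ has trivial centre; so taking a few random $g\in X$, raising each to the odd part of the global exponent $E$, and running through the chain of its $2$-power images, one either only meets non-central involutions, whence $X\cong\psl_2(q)$, or meets a central involution $z$, whence $X\cong\sl_2(q)$ and $\langle z\rangle=Z(X)$. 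In the second case I would not pass to a (black box) quotient: I would run the whole construction inside $X$, working with ``involutions modulo $z$'', that is, elements $t$ with $t^2\in\langle z\rangle$, so that every subgroup produced below is the full preimage in $X$ of the corresponding subgroup of $\psl_2(q)$. Thus a four-group lifts to a quaternion group, $\Alt_4$ lifts to $\sl_2(3)$, and $\Sym_4$ lifts to the binary octahedral group of order $48$; in each of these the quaternion subgroup is normal, so the prescribed output $N$ is, uniformly in $q$, the normalizer in $X$ of a quaternion group.

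For part~(i) I would, imitating the proof of Theorem~\ref{pgl2}, first construct a four-subgroup $K$ of $X$ (modulo $z$) and then a further involution permuting the three involutions of $K$ cyclically, obtaining a copy $A$ of $\Alt_4$. The residue of $q$ modulo $8$ is detected from the Sylow $2$-structure: the centralizer in $\psl_2(q)$ of an involution is a dihedral group of order $q\mp1$ whose Sylow $2$-subgroup is already a Sylow $2$-subgroup of $\psl_2(q)$, and this is a four-group exactly when $q\equiv\pm3\bmod 8$ and is dihedral of order at least $8$ exactly when $q\equiv\pm1\bmod 8$. I would therefore build the centralizer $C_X(t)$ of an involution $t\in A$ and extract a Sylow $2$-subgroup of it; if it is a four-group we are in the case $q\equiv\pm3\bmod 8$ and output $A$ (or, in the $\sl_2(q)$ case, the normalizer $N=N_X(K)$ of the quaternion group $K$), and otherwise we are in the case $q\equiv\pm1\bmod 8$ and adjoin the additional involution of $C_X(t)$ that enlarges $A$ to $\Sym_4\cong\pgl_2(3)$.

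For part~(ii) I would bootstrap as in Theorem~\ref{pgl2}. When $q\equiv\pm1\bmod 8$ a copy of $\pgl_2(3)$ is already present inside $X$, and the subfield descent of that theorem --- a chain of $O(\log\log q)$ passages to proper subfield subgroups, ending at the prime field --- applies essentially verbatim and delivers the subfield subgroup $\psl_2(p)$, respectively $\sl_2(p)$. When $q\equiv\pm3\bmod 8$ only a copy of $\psl_2(3)$ is available; the same descent is run with $\pgl_2(\cdot)$ replaced throughout by $\psl_2(\cdot)$, the ``reference'' elements being chosen among the involutions of $\psl_2(q)$, which form a single conjugacy class. The primes $p=5$ and $p=7$ are excluded because there the base group $\psl_2(p)$ (isomorphic to $\Alt_5$, respectively to the simple group of order $168$) is too small relative to its $\Alt_4$ and $\Sym_4$ subgroups for the descent to begin, and these are handled as separate base cases. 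The timing matches Theorem~\ref{pgl2}: the extra cost --- $O(1)$ random samples, one involution centralizer with a Sylow $2$-subgroup, and normalizer computations in a subgroup of bounded order --- is absorbed into the stated bound.

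The chief obstacle is precisely the lack of a diagonal automorphism: in $\pgl_2(q)$ the proof of Theorem~\ref{pgl2} may use diagonal-type involutions as anchors for fixing the subfield structure, and one must check that those anchors which survive inside $\psl_2(q)$, together with only an $\Alt_4$ (and not a full $\Sym_4$) when $q\equiv\pm3\bmod 8$, still determine the subfield subgroups. The second point requiring care is that $k$ --- and hence both $q\bmod 8$ and the isomorphism type of $X$ --- is unknown, so each of these case distinctions must be decided by an explicit construction that certifies its outcome (for instance, reading off the parity of $k$ from the subfield descent itself when $p\equiv\pm3\bmod 8$), so as to keep the whole procedure Las Vegas and within the complexity of Theorem~\ref{pgl2} despite the three parallel targets.
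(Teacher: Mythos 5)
Your overall strategy --- run the $\pgl_2$ construction inside $X$ and read off what the constructed subgroups become in $\psl_2(q)$ and in $\sl_2(q)$ (the four-group becomes a quaternion group, $\Alt_4$ becomes its normalizer, etc.) --- is exactly how the paper derives the corollary, namely via Remarks \ref{rem:alt4} and \ref{rem:the:killer} applied to Algorithm \ref{algorithm:psl2}. Part (i) of your argument is essentially sound, modulo a slip: the element permuting the three involutions of the four-group cyclically must have order $3$ (it is the element $x$ of Lemma \ref{elt:3}), not be ``a further involution''; and the passage from $\Alt_4$ to $\Sym_4$ when $q\equiv\pm1\bmod 8$ is effected not by hunting for an extra involution in a Sylow $2$-subgroup of $C_X(t)$ but by taking $s=t^{|T|/4}$, an element of order $4$ in the cyclic torus $T\leq C_X(i)$, which exists precisely when $4$ divides $|T|=(q\mp1)/2$, i.e.\ precisely when $q\equiv\pm1\bmod 8$. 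Your Sylow-based detection of $q\bmod 8$ is in any case unnecessary, since $q$ is computed in Step 1 of the algorithm.

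The genuine gap is in part (ii). The paper performs no iterated ``subfield descent'' through a chain of $O(\log\log q)$ intermediate subfield subgroups; no such mechanism appears anywhere in the paper, and you supply no lemma that would carry out even a single step of it, no reason the chain should terminate at the prime field rather than at some intermediate subfield, and no cost analysis for the chain. What the paper actually does (Lemma \ref{the:killer} and Remark \ref{rem:the:killer}) is a one-shot construction: power the generator $t$ of the torus $T\leq C_X(i)$ down to an element $r$ of order $p\pm1$ (of order $(p\pm1)/2$ in the $\psl_2$ case), note that $r$ lies in the unique subgroup of that order of the cyclic group $T$ and hence in a subfield subgroup $H\cong\ppsl_2(p)$ containing the already-constructed $\Alt_4$ or $\Sym_4=\langle i,j,x,s\rangle$, and then invoke the maximality of $\Alt_4$/$\Sym_4$ in $\ppsl_2(p)$ to conclude that $\langle r,x\rangle=H$ as soon as $|r|\geq 7$. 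This is exactly where the exceptions come from: for $p=5$, and for $p=7$ in the $\psl_2$ case, the element $r$ has order only $4$ and $\langle r,x\rangle$ collapses to $\Sym_4$. This generation argument is the core of the proof of (ii) and is entirely absent from your proposal; your explanation that $\psl_2(5)$ and $\psl_2(7)$ are ``too small for the descent to begin'' does not identify the actual obstruction.
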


\begin{corollary}\label{corall}
Let $X$ be a black box group encrypting $\pgl_2(p^k)$ or $\ppsl_2(p^k)$ where  $p$ is a known odd prime with known $k$. Then, for any divisor $a>1$ of $k$, there exists a Las Vegas algorithm constructing a black box subgroup encrypting a subfield subgroup $\pgl_2(p^a)$ or $\ppsl_2(p^a)$, respectively.
 \end{corollary}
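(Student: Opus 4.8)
The plan is to bootstrap from Theorem \ref{pgl2} and Corollary \ref{psl2}, which already deliver a black box subfield subgroup $\pgl_2(p)$ (resp.\ $\ppsl_2(p)$) when $p$ is not one of a few small exceptions, but to do so in a way that (a) produces the intermediate subfields $\pgl_2(p^a)$ for every divisor $a>1$ of $k$, not just the prime subfield, and (b) removes the exceptional primes $p=5,7$ by working with a genuinely nontrivial subfield $p^a$ rather than with $p$ itself. The key observation is that $\pgl_2(p^k)$ contains $\pgl_2(p^a)$ as a subfield subgroup precisely when $a\mid k$, and the chain of such subgroups is governed by the divisor lattice of $k$; since $k$ is now \emph{known}, we may factor $k$ (this is a computation on an integer of size $O(\log q)$, hence cheap) and fix a maximal chain $1 = k_0 \mid k_1 \mid \cdots \mid k_r = k$ of divisors with each $k_{i}/k_{i-1}$ prime.

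First I would recall the core mechanism underlying the proof of Theorem \ref{pgl2}: a subfield subgroup $\pgl_2(p^a)\le \pgl_2(p^k)$ is recognised by the fact that its involution centralisers are dihedral of the correct size, equivalently that a suitable pair of commuting (or suitably positioned) involutions generates a copy of $\Sym_4$ or a dihedral group whose ``rotation'' element has order dividing $p^a\pm 1$ rather than $p^k\pm 1$. Concretely, having built the $\Sym_4$-subgroup of Theorem \ref{pgl2}, one extracts from it an involution $t$ and a torus element, and the subfield subgroup is then the subgroup generated by $t$ together with a conjugate chosen so that the product lands in the intended subfield torus; the global exponent $E$ and the ability to test order divisibility via $E$-th powers (axiom \textbf{BB4}) let us verify membership in the right subfield. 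The new ingredient for Corollary \ref{corall} is that, since $k$ and hence $a$ are known, we know the exact order $p^a\pm1$ we are aiming for, so we can replace the ``search for the prime subfield'' step by a ``search for the torus of order $p^a\pm1$'' step, which is the same algorithm with a different target parameter.

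The step I would then carry out is the inductive construction along the chosen divisor chain: inside the already-constructed $X_{i-1}\cong\pgl_2(p^{k_{i-1}})$ (starting from $X_r = X$ and descending, or equivalently starting from a large subfield and refining), apply the subfield-construction routine with target field size $p^{k_{i}}$ — wait, more carefully, descend: from $X=\pgl_2(p^k)$ construct $\pgl_2(p^{k/s})$ for each prime $s\mid k$, and recurse. Because each descent step is an application of (the proof of) Theorem \ref{pgl2} to a black box group we have already built, and the number of steps is $O(\log k)=O(\log\log q)$, the running time is dominated by $O(\log\log q)$ invocations of the Theorem \ref{pgl2} algorithm, matching the stated bound. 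The exceptional primes $p=5,7$ are handled by noting that the obstruction there was only to reaching the \emph{prime} field $\pgl_2(5)$ or $\ppsl_2(7)$ — where $\Sym_4$ or $\Alt_7$-type coincidences collapse the relevant centraliser structure — but for any $a>1$ the field $p^a$ is large enough ($p^a\ge 25$) that no such coincidence occurs, so the generic argument applies verbatim; thus the hypothesis ``$p\neq 5$'' (resp.\ ``$p\neq 5,7$'') of the earlier results is not needed here.

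The main obstacle I anticipate is the verification/certification step at each level: after producing a candidate subgroup we must certify, as a Las Vegas algorithm with controlled error, that it really encrypts $\pgl_2(p^a)$ and not some other subgroup of $\pgl_2(p^k)$ (a dihedral group, $\Alt_5$, or a smaller subfield subgroup could a priori appear). This is exactly the difficulty already confronted in Theorem \ref{pgl2}, and I would reuse its solution: sample a bounded number of random elements of the candidate, compute their orders' divisibility against $p^a\pm 1$ and against $p^{a'}\pm1$ for the (finitely many, known) larger divisors $a'$, and appeal to the fact that a proper subgroup fails to contain elements of the maximal torus order with probability bounded away from $1$, so $O(1)$ samples suffice per level, giving the claimed running time and an error probability that can be driven below any user-specified bound by repetition.
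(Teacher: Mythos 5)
Your core construction is the right one and is essentially what the paper does: reuse the $\Sym_4$ data from Theorem \ref{pgl2} (in particular the order-$3$ element $x$ and a generator $t$ of the torus $T\le C_X(i)$) and extract a torus element of the target order $p^a\pm 1$. But two of your added layers are off. First, the divisor-chain recursion is unnecessary: the paper's Algorithm \ref{algorithm:psl2} takes the divisor $a$ as input and handles it in a single step by setting $r=t^{|T|/(p^a\pm 1)}$, so that Lemma \ref{the:killer} applies directly to $\langle r,x\rangle$ for \emph{any} divisor $a$ of $k$; no descent through intermediate subfields is needed, and the extra $O(\log\log q)$ factor your recursion introduces does not appear in the paper's complexity analysis. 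You also skip the parity condition that $(p^a\pm 1)/2$ be even, which is what guarantees that the cyclic torus $T$ (of order divisible by $4$, by the ``right type'' convention) actually contains the required element $r$ compatible with the $\Sym_4$.

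The genuine gap is your certification step. The paper needs no sampling-based verification because Lemma \ref{the:killer} is deterministic: the constructed $\Sym_4$ lies in some $H\cong\pgl_2(p^a)$; since $T$ is cyclic it has a \emph{unique} subgroup of order $p^a\pm 1$, which must be $T\cap H$, so $r\in H$ and hence $\langle r,x\rangle\le H$; and then $\langle r,x\rangle=H$ because $\Sym_4$ is maximal (or second maximal) in $H$ and the relevant proper overgroups of $\Sym_4$ contain no elements of order greater than $7$ --- which is also exactly why the exceptions $p=5$ (and $p=7$ for $\ppsl_2$) disappear once $a>1$, since then $|r|\ge p^2-1>7$. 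Your substitute --- sampling random elements of the candidate and testing order divisibility against $p^{a'}\pm 1$ --- establishes neither the containment $\langle r,x\rangle\le H$ nor equality; worse, a test that can erroneously accept a wrong subgroup with small positive probability yields only a Monte Carlo guarantee, whereas the corollary claims a Las Vegas algorithm (under the paper's definition, any returned answer must be correct). So you should replace the probabilistic certification by the subgroup-structure argument above; with that in place, your first two paragraphs already contain the paper's proof.
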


\section{Subfield subgroups and $Sym_4$ in $\pgl_2(p^k)$}\label{sec:alt}

Let $G\cong \pgl_2(q)$, $q=p^k$, $p$ an odd prime.
Note that $G$ has two conjugacy classes of involutions, say $\pm$-type involutions, where the order of the centralizer of a $+$-type involution is $2(q-1)$ and the order of the centralizer of a $-$-type involution is $2(q+1)$. Notice that $C_G(i)=T\rtimes \langle w \rangle$ where $T$ is a torus of order $(q\pm 1)$ and $w$ is an involution inverting $T$.  Throughout the paper, we consider the involutions of $+$-type if $\qpone$ and $-$-type if $\qmone$ so that the order of the torus $T$ is always divisible by $4$; we call them involutions of \emph{right type}.

We set $5$-tuple
\begin{equation}\label{set:1}
(i,j,x,s,T)
\end{equation}
where $i\in G$ is an involution of right type, $T<G$ is the torus in $C_G(i)$, $j\in G$ is an involution of right type which inverts  $T$, $x\in G$ is an element of order $3$ normalising $\langle i,j\rangle$ and $s \in T$ is an element of order $4$. We also set $k=ij$ and note that $k$ is also of right type. Clearly $V=\langle i,j\rangle$ is a Klein 4-subgroup and $\langle i,j,x\rangle \cong \Alt_4$.  Moreover, we have  $\langle i,j,x,s \rangle \cong \Sym_4$. 

An alternative and slightly easier construction of $\Sym_4$ in $\pgl_2(q)$ is as follows. Let $i,j \in G\cong \pgl_2(q)$ be involutions of right type where $j$ inverts the torus in $C_G(i)$, choosing the elements $t_i,t_j$  of order $4$ in the tori in $C_G(i)$ and $C_G(j)$, respectively, we have $\Sym_4 \cong \langle t_j, t_j\rangle$. However, such a construction of $\Sym_4$ in $\pgl_2(q)$ does not cover the corresponding construction of $\Alt_4$ in $\psl_2(q)$ when $q\equiv  \pm 3 \mbox{ mod } 8$, see Remark \ref{rem:alt4} (1). For the sake of completeness, we follow the setting in (\ref{set:1}).

\begin{remark}\label{rem:alt4}
{\rm
\
\begin{itemize}
\item [(1)]If $G\cong \psl_2(q)$, then $G$ has only one conjugacy classes of involutions and $C_G(i) = T\rtimes \langle w\rangle$ where $|T|=(q -1)/2$ if $\qpone$, and $|T|=(q +1)/2$ if $\qmone$. Therefore $T$ contains element of order 4  if and only if $q\equiv  \pm 1 \mbox{ mod } 8$. Thus, we can construct subgroups isomorphic to  $\Sym_4$ in $G$ precisely when $q\equiv  \pm 1 \mbox{ mod } 8$. Otherwise, the subgroup $\Alt_4$ will be constructed. We shall note here that  $\Alt_4$ or $\Sym_4$ are maximal subgroups of $\psl_2(p)$ if $p\equiv \pm 1 \mbox{ mod } 8$ or $p\equiv \pm 3 \mbox{ mod } 8$, respectively \cite[Proposition 4.6.7]{kleidman1990}.

\item[(2)] If $G\cong \sl_2(q)$, then $i, j$ are pseudo-involutions (whose squares are the central involution in $\sl_2(q)$) and $V=\langle i,j \rangle$ is a quaternion group. Moreover, if $q\equiv  \pm 3 \mbox{ mod } 8$ ($q\equiv  \pm 1 \mbox{ mod } 8$, respectively), the subgroup $\langle i,j,x \rangle$ ($\langle i,j,s,x\rangle$, respectively) is $N_G(V)$, where $s$ is an element of order $8$ in $C_G(i)$.
\end{itemize}
}
\end{remark}

The main ingredient of the algorithm in the construction of $\Sym_4$ and subfield subgroups of $G\cong \pgl_2(q)$ is to construct an element $x\in G$ of order 3 permuting some mutually commuting involutions $i,j,k \in G$ of right type. The following lemma provides explicit construction of such an element.

\begin{lemma}\label{elt:3}
Let $G\cong \pgl_2(q)$, $q$ odd, $i,j,k$ mutually commuting involutions of right type. Let $g \in G$ be an arbitrary element. Assume that $h_1=ij^g$ has odd order $m_1$ and set $n_1=h_1^{\frac{m_1+1}{2}}$ and $s=k^{gn_1^{-1}}$. Assume also that $h_2=js$ has odd order $m_2$ and set  $n_2=h_2^{\frac{m_2+1}{2}}$. Then  the element $x=gn_1^{-1}n_2^{-1}$ permutes $i,j,k$ and $x$ has order $3$.
\end{lemma}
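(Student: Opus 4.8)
The plan is to reduce the statement to one elementary fact about dihedral groups, apply it twice, and then exploit that $i,j,k$ commute.

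First I would record the fact: if $a,b\in G$ are involutions and $h=ab$ has odd order $m$, then $n:=h^{(m+1)/2}$ conjugates $a$ to $b$, i.e. $n^{-1}an=b$ (moreover $n^{2}=h$). This is immediate: $n$ is a power of $h$ and $a$ inverts $h$, so $an=n^{-1}a$, whence $n^{-1}an=n^{-2}a=h^{-(m+1)}a=h^{-1}a=(ab)^{-1}a=b$; the case $m=1$ (then $a=b$, $n=1$) is trivial. I would also note that, since $i,j,k$ commute, $\langle i,j,k\rangle$ is elementary abelian, and since Sylow $2$-subgroups of $\pgl_2(q)$ are dihedral it has order at most $4$, so $V:=\langle i,j,k\rangle$ is a Klein four-group and $k=ij$.

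Next I would run the construction. Applying the fact with $(a,b,h,n)=(i,j^{g},h_{1},n_{1})$ gives $n_{1}^{-1}in_{1}=j^{g}$, and conjugating this relation by $g$ yields $w_{1}iw_{1}^{-1}=j$ where $w_{1}:=gn_{1}^{-1}$. Since $s=k^{w_{1}}$, we have $w_{1}sw_{1}^{-1}=k$. Applying the fact again with $(a,b,h,n)=(j,s,h_{2},n_{2})$ gives $n_{2}^{-1}jn_{2}=s$. Now $x=gn_{1}^{-1}n_{2}^{-1}=w_{1}n_{2}^{-1}$, so conjugation by $x$ is the map $y\mapsto w_{1}(n_{2}^{-1}yn_{2})w_{1}^{-1}$. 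The key point is that $i=j^{w_{1}}$ and $s=k^{w_{1}}$, hence $[i,s]=[j,k]^{w_{1}}=1$ because $j$ and $k$ commute; combined with $[i,j]=1$, this shows $i$ centralises $\langle j,s\rangle$, in particular $n_{2}$. Therefore $xix^{-1}=w_{1}iw_{1}^{-1}=j$ and $xjx^{-1}=w_{1}(n_{2}^{-1}jn_{2})w_{1}^{-1}=w_{1}sw_{1}^{-1}=k$.

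Finally, $x$ conjugates $i\mapsto j$ and $j\mapsto k$, both in $V=\langle i,j\rangle$, so $x$ normalises $V$ and hence $xkx^{-1}=x(ij)x^{-1}=(xix^{-1})(xjx^{-1})=jk=i$; thus conjugation by $x$ induces the $3$-cycle $(i\,j\,k)$ on $V$, and in particular $x\ne 1$. To deduce that $x$ has order exactly $3$ I would show $C_{G}(V)=V$: since $i$ is of right type, $C_{G}(i)=T\rtimes\langle w\rangle$ is dihedral of order $2|T|$ with $4\mid|T|$, so $i$ is the unique involution of the cyclic group $T$ and $j$ is a reflection, giving $C_{G}(V)=C_{C_{G}(i)}(j)=\langle i,j\rangle=V$. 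Then $x^{3}$ centralises $V$, so $x^{3}\in V$; and since $x$ commutes with $x^{3}$ while the $3$-cycle it induces on $V$ fixes only the identity, $x^{3}=1$, so $x$ has order $3$. The one genuinely delicate step is the commutation $[i,s]=1$ — this is what forces $x$ to preserve $V$ rather than just move $i$ into $V$, and it is where the hypothesis that $i,j,k$ commute is essential; the rest, including the degenerate cases $h_{1}=1$ or $h_{2}=1$ (covered by the $m=1$ case of the fact), is bookkeeping.
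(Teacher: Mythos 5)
Your proof is correct and follows essentially the same route as the paper: the dihedral-group fact that $n=h^{(m+1)/2}$ conjugates one involution to the other, applied twice, with the key observation that $n_2$ centralises $i$ because $[i,s]=[j,k]^{w_1}=1$ — this is exactly the paper's claim that $h_2\in C_G(i)$. You additionally spell out two points the paper dismisses as clear (that $k=ij$ via dihedral Sylow $2$-subgroups, and that $x^3=1$ via $C_G(V)=V$), which is a worthwhile elaboration but not a different argument.
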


\begin{proof}
Observe first that $i^{n_1}=j^g$ and $j^{n_2}=s$. Then, since $s=k^{gn_1^{-1}}$, we have $j^{n_2}=k^{gn_1^{-1}}$. Hence $j=k^{gn_1^{-1}n_2^{-1}}=k^x$. Now, we prove that $j^x=i$. Since $j^{gn_1^{-1}}=i$, we have $j^{x}=j^{gn_1^{-1}n_2^{-1}}=i^{n_2^{-1}}$. We claim that $h_2 \in C_G(i)$, which implies that $n_2 \in C_G(i)$, so $j^x=i^{n_2^{-1}}=i$. Now, since $j \in C_G(i)$, $h_2=js\in C_G(i)$ if and only if $s =k^{gn_1^{-1}}\in C_G(i)$. Recall that $i^{n_1}=j^g$. Therefore $s \in C_G(i)$ if and only if $k^g \in C_G(j^g)$, equivalently, $k \in C_G(j)$ and the claim follows. It is now clear that $i^x=k$ since $ij=k$. It is clear that $x \in N_G(V)$ where $V=\langle i,j \rangle$ and $x$ has order 3.
\end{proof}

\begin{lemma}\label{elt:3:prob}
Let $G$, $h_1$ and $h_2$ be as in Lemma {\rm \ref{elt:3}}. Then the probability that $h_1$ and $h_2$ have odd orders is bounded from below by $1/2-1/2q$.
\end{lemma}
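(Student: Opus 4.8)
The plan is to compute each of the probabilities $\Pr[m_1\text{ odd}]$ and $\Pr[m_2\text{ odd}\mid m_1\text{ odd}]$ by turning them into a count of odd‑order elements of $\pgl_2(q)$ that are inverted by a fixed involution. First note that the right‑type involutions form a single conjugacy class $\mathcal I$ in $G\cong\pgl_2(q)$, and in fact $\mathcal I$ consists exactly of the involutions lying in $\psl_2(q)$ (a $+$‑type involution lifts to $\sl_2(q)$ iff $-1$ is a square, and correspondingly for $-$‑type); also every element of odd order lies in $\psl_2(q)$. Hence, for uniformly random $g$, the conjugate $j^g$ is uniform over $\mathcal I$, and $h_1=ij^g$ ranges over $i\mathcal I$. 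Since $(iu)^2=1$ is equivalent to $i$ commuting with $u$, the map $u\mapsto z:=iu$ is a bijection from $\{u\in\mathcal I:|iu|\text{ odd}\}$ onto $\{z\in G: izi=z^{-1},\ |z|\text{ odd}\}$ (the target automatically lands in $\mathcal I$ under $z\mapsto iz$). Writing $\varepsilon=1$ if $\qpone$ and $\varepsilon=-1$ if $\qmone$, we have $|\mathcal I|=|G:C_G(i)|=q(q+\varepsilon)/2$, so $\Pr[m_1\text{ odd}]=2A/(q(q+\varepsilon))$, where $A$ is the number of odd‑order elements of $G$ inverted by $i$.

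The core step is to enumerate $A$ using Dickson's description of the subgroup structure of $\pgl_2(q)$. An odd‑order element is the identity, a unipotent element of order $p$, or a regular semisimple element lying in a unique maximal torus of order $q-1$ (split) or $q+1$ (non‑split). Taking $i=\operatorname{diag}(1,-1)$, a direct matrix computation shows that $i$ inverts a non‑trivial unipotent iff $i$ is of $+$‑type, and then inverts exactly $2(q-1)$ of them; so the unipotent contribution is $2(q-1)$ when $\varepsilon=1$ and $0$ when $\varepsilon=-1$. For a regular semisimple $z$ in a torus $T$, $i$ inverts $z$ iff $i\in N_G(T)\setminus T$, and then $i$ inverts all of $T$; so these $z$ are precisely the odd‑order elements of the tori on which $i$ acts as an outer involution. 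A counting argument (comparing the number of incident pairs) shows $i$ acts as an outer involution on exactly $(q-\varepsilon)/2$ split tori and $(q-\varepsilon)/2$ non‑split tori, contributing respectively $d_--1$ and $d_+-1$ odd‑order elements each, where $d_\pm$ denotes the odd part of $q\pm1$. Adding up, $A=\tfrac{q-\varepsilon}{2}(d_-+d_+)-(q-\varepsilon)+1+[\varepsilon=1]\,2(q-1)$, i.e. $A=q+\tfrac{q-1}{2}(d_-+d_+)$ when $\varepsilon=1$ and $A=\tfrac{q+1}{2}(d_-+d_+)-q$ when $\varepsilon=-1$.

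Plugging in, using only the crude bounds $d_-,d_+\ge1$ together with $d_+=(q+1)/2$ when $\varepsilon=1$ and $d_-=(q-1)/2$ when $\varepsilon=-1$: for $\varepsilon=1$ one gets $A\ge q+\tfrac{(q-1)(q+3)}{4}$, hence $\Pr[m_1\text{ odd}]\ge\tfrac{q^2+6q-3}{2q(q+1)}>\tfrac12$; for $\varepsilon=-1$ one gets $A\ge\tfrac{(q-1)^2}{4}$, hence $\Pr[m_1\text{ odd}]\ge\tfrac{q-1}{2q}=\tfrac12-\tfrac1{2q}$, with equality exactly when $q+1$ is a power of $2$. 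This is the asserted bound for $h_1$.

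For $h_2$ one argues in the same spirit: conditioned on $m_1$ odd, $n_1\in\langle h_1\rangle$ transports $j$ to $i$ (Lemma \ref{elt:3}), so $gn_1^{-1}$ is uniform over $\{y:j^y=i\}$ and $s=k^{gn_1^{-1}}$ is distributed among the $C_G(j)$‑conjugates of the reflection $k$ of $C_G(j)$ pushed into $C_G(i)$; since $h_2=js\in C_G(i)$ and every odd‑order element of $C_G(i)$ lies in the torus $T$ and is inverted by $j$, the conditional probability again reduces to counting odd‑order elements in a controlled subset of $C_G(i)$, and the same estimate applies. The step I expect to be the real work is the enumeration of $A$ — in particular pinning down exactly how many split and non‑split tori carry $i$ as an outer involution, and the distribution of the $\pm$‑types among the involutions of $N_G(T)\setminus T$ — and carrying the two cases $q\equiv\pm1\pmod 4$ through in parallel; once those counts are secured the probability bound is a one‑line calculation.
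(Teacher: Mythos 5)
Your treatment of $h_1$ is essentially sound and is genuinely different from (and finer than) the paper's: the paper does not enumerate the odd-order elements inverted by $i$, but instead works inside $H\cong\psl_2(q)$ and double-counts pairs of involutions whose product falls into a torus of the odd order $a$ (one of $(q\pm1)/2$), using that each such torus contributes $a^2$ pairs and that there are $bq$ such tori, which yields $b/q\ge 1/2-1/(2q)$ while simply discarding the unipotent and even-torus contributions that you keep. For the single event $\{m_1\text{ odd}\}$ your local count is a legitimate alternative, and your exact value $\tfrac{q-1}{2q}$ when $q+1$ is a $2$-power is a useful sanity check that the constant cannot be improved.

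The genuine gap is in the second half. The lemma bounds the \emph{joint} probability that $h_1$ and $h_2$ both have odd order, and your paragraph on $h_2$ (``the conditional probability again reduces to counting odd-order elements in a controlled subset of $C_G(i)$, and the same estimate applies'') is not an argument and, taken at face value, points in a direction that fails. Conditioned on $m_1$ odd, $s=k^{gn_1^{-1}}$ is an involution of $C_G(i)$; if $s=i$ then $h_2=ji=k$ has order $2$, so in the event that matters $s$ is a reflection of the dihedral group $C_G(i)=T\rtimes\langle w\rangle$ and hence $h_2=js$ lies in the cyclic torus $T$. By the choice of right type $|T|=q\pm1$ is divisible by $4$, so the odd-order elements of $T$ form a subgroup of index at least $4$; consequently, if $s$ were anywhere near uniform over the reflections of $C_G(i)$, the conditional probability of $m_2$ odd would be at most about $1/4$, and the claimed joint bound $1/2-1/(2q)$ could not follow by ``the same estimate.'' Closing this requires controlling the actual conditional distribution of $s$ (equivalently, of $y=gn_1^{-1}$ over the coset $\{y:j^y=i\}$) and showing it is concentrated on reflections with $js$ in the odd part of $T$ --- precisely the point that the paper compresses into its assertion that the joint event has the same probability as a single product of two random involutions having odd order. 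As it stands, your proposal establishes the bound only for $h_1$ alone.
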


\begin{proof}
We first note that the subgroup $\langle i,x\rangle \cong \Alt_4$ is a subgroup of $L\leq G$ where $L\cong \psl_2(p)$, so the involutions $i,j,k$ belong to a subgroup isomorphic to $ \psl_2(q)$. Therefore it is enough to compute the estimate in $H \cong \psl_2(q)$. Notice that all involutions in $H$ are conjugate. Therefore the probability that $h_1$ and $h_2$ have odd orders is the same as the probability of the product of two random involutions from $H$ to be of odd order.

We denote by $a$ one of these numbers $(q\pm 1)/2$ which is odd and by $b$ the other one. Then $|H|=q(q^2-1)/2=2abq$ and $|C_H(i)|=2b$ for any involution $i\in H$. Hence the total number of involutions is
\[
\frac{|H|}{|C_H(i)|}=\frac{2abq}{2b}=aq.
\]

Now we shall compute the number of pairs of involutions $(i,j)$ such that their product $ij$ belongs to a torus of order $a$. Let $T$ be a torus of order $a$. Then $N_H(T)$ is a dihedral group of order $2a$. Therefore the involutions in $N_H(T)$ form the coset $N_H(T)\backslash T$ since $a$ is odd. Hence, for every torus of order $a$, we have $a^2$ pairs of involutions whose product belong to $T$. The number of tori of order $a$ is $|H|/|N_H(T)|=2abq/2a=bq$. Hence, there are $bqa^2$ pairs of involutions whose product belong to a torus of order $a$. Thus the desired probability is
\[
\frac{bqa^2}{(aq)^2}=\frac{b}{q}\geqslant \frac{q-1}{2q}=\frac{1}{2}-\frac{1}{2q}.
\]
\end{proof}

For the subfield subgroups isomorphic to $\pgl_2(p^a)$ of $G\cong \pgl_2(q)$, $q=p^k$, $p$ an odd prime, we extend our setting in (\ref{set:1}) and set $6$-tuple
\begin{equation}\label{set:2}
(i,j,x,s,r,T)
\end{equation}
where $r\in T$ has order $(p^a\pm 1)$ where $(p^a\pm 1)/2$ is even. Notice that if  $a$ is a divisor of $k$, then the torus $T$ contains an element $r$ of order $(p^a\pm 1)$ where $(p^a\pm 1)/2$ is even. The following lemma provides explicit generators of the subfield subgroups of $G$.

\begin{lemma}\label{the:killer}
Let $G\cong \pgl_2(q)$, $q=p^k$ for some $k\geqslant 2$ and $(i,j,x,s,r,T)$ be as in {\rm (\ref{set:2})}. Then $\langle r, x \rangle \cong \pgl_2(p^a)$ except when $a=1$ and $p=5$.
\end{lemma}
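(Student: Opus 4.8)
The plan is to trap $H:=\langle r,x\rangle$ between a fixed subfield subgroup isomorphic to $\pgl_2(p^a)$ from above and a subgroup strictly larger than $\Alt_4$ from below, and then to read off the isomorphism type of $H$ from Dickson's classification of the subgroups of $\pgl_2(p^a)$ \cite{dickson1958}.

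First I would show that $\Alt_4\le H$. Since $T$ is cyclic of order $q\pm1$ and $r$ has even order $p^a\pm1$, the element $r^{(p^a\pm1)/2}$ is the unique involution of $T$; and that involution is $i$, because $i\in Z(C_G(i))$ while the fact that $j$ inverts $T$ forces $Z(C_G(i))=\{t\in T:t^2=1\}$, a group of order~$2$. Hence $i\in\langle r\rangle\le H$, and since $x$ permutes $i,j,k$ with $\langle i,j,x\rangle\cong\Alt_4$ (from the choice of the tuple {\rm(\ref{set:1})}), we get $\Alt_4=\langle i,j,x\rangle\le H$; in particular $H$ contains the Klein four-group $V=\langle i,j\rangle$, which is non-cyclic of order prime to $p$.

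Next I would locate the subfield subgroup. By Dickson's theorem, exactly as already used in the proof of Lemma~\ref{elt:3:prob}, the group $\langle i,j,x\rangle\cong\Alt_4$ lies in a subfield subgroup $L\cong\psl_2(p)$ of $G$, hence in one isomorphic to $\pgl_2(p)$; extending the ground field from $\GF_p$ to $\GF_{p^a}$ (which is legitimate since $a\mid k$) yields a subfield subgroup $F\cong\pgl_2(p^a)$ with $L\le F\le G$. As $F\hookrightarrow G$ is a group monomorphism, it sends the maximal torus of $F$ containing $i$ to a cyclic subgroup of $C_G(i)$, hence into $T$; this subgroup has order $p^a\pm1$ — the very sign occurring in the order of $r$, as the set-up {\rm(\ref{set:2})} arranges — so by cyclicity of $T$ it must equal $\langle r\rangle$. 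Therefore $r\in F$ and $x\in\Alt_4\le L\le F$, whence $H=\langle r,x\rangle\le F\cong\pgl_2(p^a)$.

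It then remains to identify $H$ among the subgroups of $F\cong\pgl_2(p^a)$. By Dickson again, $H$ is cyclic, dihedral, one of $\Alt_4,\Sym_4,\Alt_5$, contained in a Borel subgroup of $F$, or isomorphic to $\psl_2(p^m)$ or $\pgl_2(p^m)$ with $m\mid a$. Since $H\ge\Alt_4$ it is not cyclic, not dihedral (subgroups of dihedral groups are cyclic or dihedral, and $\Alt_4$ is neither), and not inside a Borel subgroup (it contains the non-cyclic subgroup $V$ of order prime to $p$). The element $r\in H$ has order $p^a\pm1$, which is divisible by~$4$: this eliminates $\Alt_4$ and $\Alt_5$ (no elements of order divisible by $4$) and all $\psl_2(p^m),\pgl_2(p^m)$ with $m<a$ (whose element orders are at most $p^m+1<p^a\pm1$), and it eliminates $\Sym_4$ unless $p^a\pm1=4$, i.e. unless $a=1$ and $p\in\{3,5\}$. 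The case $p=5$ is excluded by hypothesis, and if $p=3$ and $a=1$ then $F\cong\pgl_2(3)\cong\Sym_4$ while $H$ properly contains $\Alt_4$ (it has the order-$4$ element $r$), so $H=\Sym_4\cong\pgl_2(3)$. In every remaining case $H\in\{\psl_2(p^a),\pgl_2(p^a)\}$; but for $p$ odd the element orders of $\psl_2(p^a)$ divide $p$, $(p^a-1)/2$, or $(p^a+1)/2$, none of which is a multiple of $p^a\pm1$, so $\psl_2(p^a)$ has no element of the order of $r$. Hence $H\cong\pgl_2(p^a)$, as required.

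The delicate point is the middle step: exhibiting a \emph{single} subfield subgroup $F\cong\pgl_2(p^a)$ containing both $r$ and $x$, which is what forces the otherwise unbounded group $\langle r,x\rangle$ to be small. This is precisely what the explicit construction of $x$ in Lemma~\ref{elt:3} delivers — without the knowledge that $x$ lies in the same $\psl_2(p)$ as the involution $i=r^{(p^a\pm1)/2}$, an element of order $3$ together with a torus element of order $p^a\pm1$ could perfectly well generate all of $\pgl_2(p^k)$. Everything after that step is routine bookkeeping with Dickson's list and element orders.
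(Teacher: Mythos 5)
Your proof is correct and follows essentially the same route as the paper's: trap $\langle r,x\rangle$ inside a subfield subgroup $\pgl_2(p^a)$ by using that the cyclic torus $T$ has a unique subgroup of order $p^a\pm 1$, then force equality via Dickson's subgroup list and the order of $r$. You supply somewhat more detail than the paper does (notably that $i=r^{|r|/2}$, so $\Alt_4\leqslant\langle r,x\rangle$, and the explicit elimination of each case in Dickson's list), but the underlying argument is the same.
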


\begin{proof}
Let $L=\langle i,j,x,s \rangle \cong \Sym_4\cong \pgl_2(3)$. Observe that $L$ is a subgroup of some $H\leqslant G$ where $H\cong \pgl_2(p)$. Now assume first that $a=1$. Since $r \in C_G(i)$,  the order of the subgroup $T\cap H$ is  $p\pm 1$. Since $T$ is cyclic, it has only one subgroup of order $p\pm 1$ so $r\in H$. Thus $\langle r,x\rangle \leqslant H$. By the subgroup structure of $\pgl_2(p)$, the subgroup $L\cong \Sym_4$ is either a maximal subgroup or contained in a maximal subgroup of $H$ isomorphic to $\Sym_4 \rtimes \langle \delta \rangle$ where $\delta$ is a diagonal automorphism of $\psl_2(q)$. Hence, if $|r|\geqslant 7$, or equivalently $p\geqslant 7$, then we have $\langle r,x\rangle =H$ since such a maximal subgroup does not contain elements of order bigger than 7. As we noted above, if $p=3$, then $L\cong \Sym_4\cong \pgl_2(3)$.

Observe that if $a>1$ and $a$ is a divisor of $k$, then an element $r$ of order $p^a\pm 1$, where $(p^a\pm1)/2$ is even, belongs to a subgroup $H\cong \pgl_2(p^a)$  hence the lemma follows from the same arguments above.
\end{proof}

\begin{remark}\label{rem:the:killer}
\
{\rm
\begin{itemize}
\item[(1)] Following the notation of Lemma \ref{the:killer}, observe that if $a=1$ and $p=5$, then $|r|=4$ and  $\langle r,x\rangle \cong \Sym_4$.
\item[(2)] If $G\cong \psl_2(q)$, then, there is one more exception in the statement of Lemma \ref{the:killer}, that is, $a=1$ and $p=7$. This extra exception arises from the fact that the torus $T\cap H$ in the proof of Lemma \ref{the:killer} has order $(p\pm 1)/2$ and the element $r$ has order $4$. Again, we are in the situation that $\langle r,x\rangle\cong \Sym_4 <\psl_2(7)$.
\item[(3)]If $G\cong \sl_2(q)$, then, by considering the pseudo-involutions, the same result in Lemma \ref{the:killer} holds with the exceptions $a=1$ and $p=5$ or $7$.
\end{itemize}
}
\end{remark}

\section{The algorithm}\label{sec:algo}
In this section we present an algorithm for the black box group encrypting $\pgl_2(p^k)$ and the corresponding algorithm for the groups $\ppsl_2(p^k)$ follows from Remarks \ref{rem:alt4} and \ref{rem:the:killer}.

In order to cover the algorithm in Corollary \ref{corall}, we assume below that a divisor $a$ of $k$ is given as an input. Observe that such an input is not needed for the construction of a subfield subgroup $\pgl_2(p)$.

\begin{algorithm}\label{algorithm:psl2} Let $X$ be a black box group isomorphic to $\pgl_2(q)$, $q=p^k$, $p$ an odd prime.
\begin{itemize}
\item[Input:]
\begin{itemize}
\item[$\bullet$] A set of generators of $X$.
\item[$\bullet$] The characteristic $p$ of the underlying field.
\item[$\bullet$] An exponent $E$ for $X$.
\item[$\bullet$] A divisor $a$ of $k$.
\end{itemize}
\item[Output:]
\begin{itemize}
\item[$\bullet$] A black box subgroup encrypting $\Sym_4$.
\item[$\bullet$] A black box subgroup encrypting $\pgl_2(p^a)$ except when $a=1$ and $p=5$.
\end{itemize}
\end{itemize}
\end{algorithm}

Outline of Algorithm \ref{algorithm:psl2} (a more detailed description follows below):
\begin{itemize}
\item[1.] Find the size of the field $q=p^k$ (This step is not needed for Corollary \ref{corall}).
\item[2.] Construct an involution $i\in X$ of right type from a random element together with a generator $t$ of the torus $T< C_X(i)$ and a Klein 4-group $V=\langle i,j\rangle$ in $X$ where $j$ is an involution of right type.
\item[3.] Construct an element $x$ of order 3 in $N_X(V)$.
\item[4.]  Set $s=t^{|T|/4}$ and deduce that $\langle s,x \rangle \cong \Sym_4$.
\item[5.] Set $r=t^{|T|/(p^a\pm 1)}$ where $(p^a \pm 1)/2$ is even and deduce that $\langle r,x\rangle\cong \pgl_2(p^a)$ except when $a=1$ and $p=5$.
\end{itemize}

Now we give a more detailed description of Algorithm \ref{algorithm:psl2}.

\bd
\item[{\bf Step 1:}] We compute the size $q$ of the underlying field by Algorithm 5.5 in \cite{yalcinkaya07.171}.\\

\item[{\bf Step 2:}] Let $E=2^km$ where $(2,m)=1$. Take an arbitrary element $g\in X$. If the order of $g$ is even, then the last non-identity element in the following sequence is an involution
\[
1\neq g^m, \, g^{2m},\, g^{2^2m}, \ldots, g^{2^km}=1.
\]
Let $i\in X$ be an involution constructed as above. Then, we construct $C_X(i)$ by the method described in \cite{borovik02.7, bray00.241} together with the result in \cite{parker10.885}. To check whether $i$ is an involution of right type, we construct a random element $g\in C_X(i)$ and consider $g^{q\pm1}$. If $|g|>2$ and $g^{q+1}\neq 1$, then $i$ is of $+$-type. We follow the analogous process to check whether $i$ is of $-$-type. We have $C_X(i) = T\rtimes \langle w \rangle$ where $T$ is a torus of order $q \pm 1$ and $w$ is an involution which inverts $T$. Observe that the coset $Tw$ consists of involutions inverting $T$, so half of the elements of $C_X(i)$ are the involutions inverting $T$ and half of the involutions in $Tw$ are of the same type as $i$. We check whether $j$ has the same type as $i$ by following the same procedure above. Let $j \in C_X(i)$ be such an involution, then, clearly, $V=\langle i,j \rangle$ is a Klein 4-group. For the construction of a generator of $T$, notice that a random element of $C_X(i)$ is either an involution inverting $T$ or an element of $T$ and, by \cite{mitrinovic1996}, the probability of finding a generator of  a cyclic group of order $q \pm 1$ is at least $O(1/\log \log q)$. Since $|T|$ is divisible by $4$, we can find an element $t \in C_X(i)$ such that $t^2\neq 1$ and $t^{|T|/2} \neq 1$ with probability at least $O(1/\log \log q)$ and such an element is a generator of $T$.

\item[{\bf Step 3:}] By Lemmas \ref{elt:3} and \ref{elt:3:prob}, we can construct an element $x$ of order 3 normalizing $V=\langle i,j\rangle$ with probability at least $1/2-1/2q$.
\item[{\bf Step 4:}]  Since the order $T$ is divisible by 4, we set $s=t^{|T|/4}$ and we can deduce that $\langle s,x \rangle \cong \Sym_4$ from the discussion in the beginning of Section \ref{sec:alt}.
\item[{\bf Step 5:}] It follows from Lemma \ref{the:killer} that the subgroup $\langle r,x \rangle$ encrypts a black box group $\pgl_2(p^a)$ except when $a=1$ and $p=5$.
\ed

Following the arguments in Remarks \ref{rem:alt4} and \ref{rem:the:killer}, we have the corresponding algorithms for the black box groups encrypting $\ppsl_2(q)$.

\subsection{Complexity}\label{complexity}

Let $\mu$ be an upper bound on the time requirement for each group operation in $X$ and $\xi$ an upper bound on the time requirement, per element, for the construction of random elements of $X$.

We outline the running time of Algorithm \ref{algorithm:psl2} for each step as presented in the previous section. For simplicity, we assume that  $E=|X|=|\pgl_2(q)|=q(q^2-1)$.

\bd
\item[{\bf Step 1}]  First, random elements in $X$ belong to a torus of order $q-1$ or $q+1$ with probability at least $1-O(1/q)$. Then, in each type of tori, by \cite{mitrinovic1996}, we can find an elements of order $q-1$ and $q+1$ with probability $c/\log \log q$ for some constant $c$. Therefore, producing $m=O(\log \log q)$ elements $g_1, \ldots, g_m$, we assume that one of $g_i$ has order $q-1$ and $g_j$ has order $q+1$. Now, checking each $g_i^{p(p^{2\ell}-1)} =1$ involves at most $\log p^{2\ell +1}$ group operations making the overall cost to determine the exact power of $p$ involving in $q=p^k$,
\[
\sum_{\ell=1}^k \log(p^{2\ell+1})=\log p^{k^2+2k}=(k+2)\log q.
\]
Hence the size of the field can be computed in time $O(k \mu \log \log q \log q + \xi \log \log q)$.
\item[{\bf Step 2}]  By \cite[Corollary 5.3]{isaacs95.139}, random elements in $X$ have even order with probability at least $1/4$. Then, construction of an involution $i$ from a random element  and checking whether an element of the form $ii^g$ has odd order for a random element involves constant number of construction of a random element in $X$ and $C_X(i)$ and $\log E \leq \log q^3$ group operations by repeated square and multiply method. Checking whether an involution is of desired type involves $\log E$ group operations. By \cite{mitrinovic1996}, we can find a generator for the torus $T \leq C_X(i)$ with probability $O(1/\log \log q)$ and checking whether it is indeed a generator of $T$ involves $\log q$ group operations. Hence we can construct involutions $i,j$ of desired type and a generator $t$ of the torus $T$ in time $O(\xi (1+\log \log q) + \mu \log \log q \log q )$.
\item[{\bf Step 3}] By Lemma \ref{elt:3:prob} the elements $h_1=ij^g$ and $h_2=jk^{gu_1^{-1}}$ have odd orders $m_1$ and $m_2$ with probability $1/2-1/2q$. Checking both elements for having odd order and construction of elements $h_1^{\frac{m_1+1}{2}}$ and $h_2^{\frac{m_2+1}{2}}$ involves $\log E$ group operations making overall cost $O(\xi+\mu\log q)$ to construct an element $x$ of order $3$ permuting the involutions $i,j,k$ of right type.
\item[{\bf Step 4}] The element $s$ can be constructed in time  $O(\mu \log q)$.
\item[{\bf Step 5}] The element $r$ can be constructed in time  $O(\mu \log q)$.
\ed
Combining the running times of the steps above, the overall running time of the algorithm for the construction of $\Sym_4$ and $\pgl_2(p^k)$ is $O(\xi(\log \log q +1) +\mu( k\log \log q \log q + \log q))$.

Observe that the algorithm presented in Section \ref{sec:algo} together with Remarks \ref{rem:alt4} and \ref{rem:the:killer} and the computation of the complexity above gives a proof of Theorem \ref{pgl2} and Corollaries \ref{psl2} and \ref{corall}.

\end{document}